\documentclass[12pt]{amsart}
\usepackage{amsmath,amsthm,latexsym,amscd,amsbsy,amssymb,url}
\usepackage[all]{xypic}
\setlength{\textwidth}{5.6in}

\sloppy

 \relax

%%%%%%%%%%%%%%% BEGIN AUTHORS'' MACROS %%%%%%%%%%%%

\chardef\bslash=`\\ % p. 424, TeXbook

\makeatletter
\def\verbatim{\interlinepenalty\@M \@verbatim
  \leftskip\@totalleftmargin\advance\leftskip2pc
  \frenchspacing\@vobeyspaces \@xverbatim}
\makeatother
\hfuzz1pc

\newtheorem{thm}{Theorem}[section]

\newtheorem{lem}[thm]{Lemma}
\newtheorem{pro}[thm]{Proposition}
\newtheorem{defin}[thm]{Definition}

%%%%%%%%%%% END AUTHORS' MACROS %%%%%%%%%%%%%

%%%%%%%%%%% BEGIN TEXT %%%%%%%%%%%%%%%%%%%%%%
\begin{document}

%%%%%%% Begin Topmatter %%%%%%%%%%

\title
{On Multivalued Fixed-Point Free Maps on $\mathbb R^n$}
\author{Raushan ~Z.~Buzyakova}
\address{Department of Mathematics and Statistics,
The University of North Carolina at Greensboro,
Greensboro, NC, 27402, USA}
\email{rzbouzia@uncg.edu}
\keywords{fixed point, hyperspace, multivalued function}
\subjclass{54H25, 58C30, 54B20}

%%%%%%% End topmatter %%%%%%%%%

\begin{abstract}{
To formulate our results let $f$ be a continuous multivalued map from
$\mathbb R^n$ to $2^{\mathbb R^n}$ and $k$ a  natural number such that $|f(x)|\leq k$ for all $x$.
We prove that $f$ is fixed-point free if and only if its continuous extension
$\tilde f:\beta \mathbb R^n\to 2^{\beta \mathbb R^n}$ is fixed-point free. 
If one wishes to stay within metric terms, the result 
can be formulated as follows: $f$ is fixed-point free if and only 
if there exists a continuous fixed-point free extension $\bar f: b\mathbb R^n\to 2^{b\mathbb R^n}$ for some 
metric compactificaton $b\mathbb R^n$  of $\mathbb R^n$.
Using the classical notion of colorablity, we prove
that such an $f$ is always colorable. Moreover, a number of colors sufficient to paint 
the graph can be expressed as a function of $n$ and $k$ only. 
The mentioned results also hold if the domain is replaced by any closed subspace
of $\mathbb R^n$ without any changes in the range.
}
\end{abstract}

\maketitle
\markboth{Raushan Z. Buzyakova}{On Multivalued Fixed-Point Free Maps on $\mathbb R^n$}
{ }

\section{Introduction}\label{S:intro}

A series of topological results about fixed-point free maps are motivated by these two classical set-theoretical
statements (see, in particular, \cite{BE}):
\par\bigskip\noindent
{\it
S1. If  $f:X\to X$ is a fixed-point free map, then there exists a finite cover $\mathcal F$ of $X$ such  that $f(F)$ misses $F$ 
for each $F\in {\mathcal F}$; and
\par\medskip\noindent
S2. Let ${\mathcal P}(X)$ be the set  of all non-empty subsets of $X$ and let $f: X\to {\mathcal P}(X)$ be a map with the property that $x\not \in f(x)$. If there exists a natural number $k$ such that $|f(x)|\leq k$ for
all $x\in X$, then there exists a finite cover $\mathcal F$ of $X$ such  that $F$ misses $\cup\{f(x):x\in F\}$ 
for each $F\in {\mathcal F}$.
} 
\par\bigskip\noindent
One of 
the first results of topological nature related to these statements was obtained by Katetov in \cite{K} by translating the first statement
into this form: {\it For a discrete space $X$, a map $f:X\to X$ is fixed-point free if and only if its 
continuous extension $\tilde f:\beta X\to \beta X$ is
fixed-point free.} This topological version suggests that if one wants to have a similar criterion for a non-discrete space $X$ one has to at least demand
that elements of covers $\mathcal F$ in the statements under discussion be closed subsets of $X$. Thus, it is commonly accepted that when working with a topological space $X$ and a continuous map $f$ from a closed
subspace $A$ of  $X$ to $X$,
any closed subset $F$ of $A$ that misses its image under $f$ is called a {\it color of $f$}. If  there exists a finite cover ("coloring") of $X$ by
colors,  $f$ is said to be {\it colorable}.
Katetov's paper \cite{K} and van Dowen's work \cite{D} have made a significant impact on topologists' interest in the topic.  A number of interesting results of topological nature in the direction of the first statement have been published since the mentioned papers
(see, in particular,  \cite[Section 3.2]{VM2} for references). In this paper we consider one of natural topological versions of the second statement for
Euclidean space $\mathbb R^n$ and its hyperspace $2^{\mathbb R^n}$. 
In \cite{bc} it is proved that a continuous fixed-point free map from  a closed subspace of $\mathbb R^n$ to $\mathbb R^n$
is colorable. In this paper we consider fixed-point free multivalued maps on $\mathbb R^n$ and its closed subspaces. To formulate our main results 
we first introduce the necessary terminology related to multivalued maps.

For a topological space $X$, we use symbol $2^X$ to denote the space of all non-empty closed subsets of $X$ endowed 
with the Vietoris topology and symbol ${\rm exp}_k(X)$ to denote the subspace of $2^X$ that consists of only  those
$A\in 2^X$ for which $|A|\leq k$.
A multivalued map $f: X\subset Z\to 2^Z$ is {\it fixed-point free} if $x\not \in f(x)$ for every $x\in X$.
A closed set $F\subset X$ is a  {\it color}
of a continuous map $f$ from $X\subset Z$ to $2^Z$ if $F$ misses $\cup \{f(x):x\in F\}$.
If $X$ can be covered by finitely many colors of $f$ then $f$ is said to be {\it colorable}.
The main results of the paper are  Theorems  \ref{thm:chromaticnumber} and \ref{thm:colorability}, which state that any  continuous fixed-point free map from a closed subspace of $\mathbb R^n$ to ${\rm exp}_k(\mathbb R^n)$ is colorable and
there exists a formula that computes a number of colors sufficient for painting in terms of $n$ and $k$ only. Using the main result 
we also show that a criterion similar to the Katetov's holds for multivalued maps as well. Namely, we show
that a continuous map $f$ from a closed subspace $X$ of $\mathbb R^n$ to ${\rm exp}_k(\mathbb R^n)$ is fixed-point free
if and only if its continuous extension $\tilde f: \beta X\to {\rm exp}_k(\beta\mathbb R^n)$ is fixed-point free.
To justify the requirement on sizes of $f(x)$ in our main results let us consider one simple example.
Define $f$ from $\omega\setminus \{0\}$ to the space of finite subsets of $\omega$
as follows: $f(n) =\{n+1,n+2,...,2n\}$. The map $f$ is continuous because $\omega$ and the space of finite subsets
of $\omega$ are discrete. Since for every $n\in \omega\setminus \{0\}$, all elements of $\{n+1, n+2,...,2n\}$ must be of different color
we conclude that $f$ is not colorable. This example  justifies our requirement  in the main results
that the set  $\{|f(x)|: x\in X\}$ is bounded by a positive integer.
Before we dive into the technical part of the paper we would like to outline  a short transparent argument
of the main results of the paper for a fixed-point free map $f:\mathbb R\to {{\rm exp}}_2(\mathbb R)$.
For this let $f_1(x) =\min f(x)$ and $f_2(x)=\max f(x)$.
Since $f$ is fixed-point free, the maps $f_1$ and  $f_2$
are fixed-point free real-valued maps.
By \cite[Theorem 2.5]{b}, both functions are colorable. If one lets ${\mathcal F}_1$ and ${\mathcal F}_2$
be colorings of these maps then it is easy to verify that the family 
$\{A\cap B: A\subset {\mathcal F}_1,\ B\subset {\mathcal F}_2\}$ is a coloring of $f$. If one wishes to extend
the argument for the case ${{\rm exp}}_3(\mathbb R)$ using a straightforward inductive approach, then one may find 
oneself dealing with open colors or with a single-valued map with the domain being  a proper subset of the range. 
Existence of open colors
in a single-valued case can be easily deduced from the definition of colorability when one deals with
self-maps. However, if one works with maps from a subspace $X$ of $Y$ into $Y$, a work needs to be done.
Nevertheless, the presented argument can be extended  for ${{\rm exp}}_k(\mathbb R)$ for any $k$ with
some more work and suitable references. Although our argument for any $n$ and $k$ that we present in the paper may seem different from the one just described, a closer look will reveal that
it carries the same idea hidden behind technical details naturally arising when dealing with higher dimensions. 
Throughout the paper we will follow standard 
 notation and terminology as in \cite{Eng}.

\section{Results}\label{S:results}

For simplicity, but without loss of generality,  most of our arguments related to $\mathbb R^n$ will be carried out
for $\mathbb R^5$. This will free the letter "n"  for other purposes.
Since throughout our discussion we will juggle several spaces at  a time we agree that by $\bar S$ we denote the
closure of $S$ in $\mathbb R^k$ (where the value of $k$ is always understood from the context) while $cl_X(P)$ will  denote the closure of $P$ in $X$.
\par\bigskip\noindent
A standard neighborhood in $2^X$ will be denoted as 
$$
\langle U_1,...,U_m\rangle = \{A\in 2^X: A\subset U_1\cup ...\cup U_m\ and\ U_i\cap A\not =\emptyset\ for \ all\ i=1,...,m\},
$$
where $U_1,...,U_m$ are open sets of $X$.

\par\bigskip\noindent
By ${{\rm exp}}_n(X)$ we denote the subspace $\{F\in 2^X: |F|\leq n\}$.

\par\bigskip\noindent
\begin{defin}\label{defin:brightcolor}
Let $f$ be a continuous map from $X\subset Z$ to $2^Z$. A closed set $F\subset X$ is a bright color
of $f$ if $F$ misses $cl_{Z}[\cup \{f(x):x\in F\}]$.
\end{defin}

\par\bigskip\noindent
\begin{pro}\label{pro:openbrightcolor}
Suppose $Z$ is normal, $X$ is closed  in $Z$, $f:X\to 2^Z$ is continuous, and $F$ is a bright color of $f$.
Then there exists an open neighborhood $U$ of $F$ such that $cl_{X}(U)$ is a bright color of $f$.
\end{pro}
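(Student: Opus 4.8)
The plan is to separate $F$ from its image by open sets, use the Vietoris continuity of $f$ to keep the images of all nearby points trapped on the image side, and then shrink back to a neighborhood whose $X$-closure still enjoys both properties. Write $K = cl_{Z}[\cup\{f(x):x\in F\}]$. Since $X$ is closed in $Z$ and $F$ is closed in $X$, the set $F$ is closed in $Z$; and $K$ is closed in $Z$ by construction. The hypothesis that $F$ is a bright color says exactly that $F\cap K=\emptyset$. As $Z$ is normal, Urysohn's lemma yields open sets $V\supset F$ and $W\supset K$ with $cl_{Z}(V)\cap cl_{Z}(W)=\emptyset$ (take $g:Z\to[0,1]$ with $g(F)=0$ and $g(K)=1$, and set $V=g^{-1}([0,1/3))$, $W=g^{-1}((2/3,1])$).

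Next I would exploit continuity in the Vietoris topology. The set $\langle W\rangle=\{A\in 2^Z : A\subset W\}$ is open in $2^Z$, so $f^{-1}(\langle W\rangle)=\{x\in X : f(x)\subset W\}$ is open in $X$. For each $x\in F$ we have $f(x)\subset K\subset W$, hence $F\subset f^{-1}(\langle W\rangle)$. Put $O=(V\cap X)\cap f^{-1}(\langle W\rangle)$; this is open in $X$ and contains $F$. Since $X$ is closed in the normal space $Z$, it is itself normal, so the disjoint closed sets $F$ and $X\setminus O$ can be separated inside $X$: there is an open set $U$ in $X$ with $F\subset U\subset cl_{X}(U)\subset O$. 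This $U$ is the desired neighborhood.

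Finally I would verify that $cl_{X}(U)$ is a bright color. Because $cl_{X}(U)\subset O\subset f^{-1}(\langle W\rangle)$, every $x\in cl_{X}(U)$ satisfies $f(x)\subset W$, so $\cup\{f(x):x\in cl_{X}(U)\}\subset W$ and therefore $cl_{Z}[\cup\{f(x):x\in cl_{X}(U)\}]\subset cl_{Z}(W)$. On the other hand $cl_{X}(U)\subset O\subset V\subset cl_{Z}(V)$, and $cl_{Z}(V)\cap cl_{Z}(W)=\emptyset$. Hence $cl_{X}(U)$ misses $cl_{Z}[\cup\{f(x):x\in cl_{X}(U)\}]$, which is precisely the assertion that $cl_{X}(U)$ is a bright color of $f$.

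The main obstacle is the coupling between the domain set and its image: thickening $F$ to a neighborhood simultaneously enlarges $\cup\{f(x)\}$, so a naive separation of $F$ from $K$ is not stable under the enlargement. The device that resolves this is passing to the open set $f^{-1}(\langle W\rangle)$, which confines the images of \emph{all} nearby points to the fixed open set $W$; normality of $X$ then lets the closure $cl_{X}(U)$ stay inside this controlled region rather than merely inside $V$, and the disjointness of $cl_{Z}(V)$ and $cl_{Z}(W)$ delivers the bright-color condition.
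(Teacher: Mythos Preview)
Your proof is correct and follows essentially the same route as the paper's: separate $F$ from $K=cl_Z[\cup\{f(x):x\in F\}]$ by disjoint open sets with disjoint closures, use the Vietoris open set $\langle W\rangle$ and continuity of $f$ to trap images of nearby points in $W$, then shrink via normality to get $cl_X(U)$ inside the controlled region. The only differences are cosmetic (the paper swaps the roles of your $V$ and $W$) and that you spell out explicitly the use of normality of $X$ to obtain $cl_X(U)\subset O$, a step the paper leaves implicit.
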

\begin{proof}
Since $F$ is a bright color of $f$ and $X$ is closed in $Z$, 
the sets $F$ and $cl_Z(\cup \{f(x):x\in F\})$ are disjoint closed sets in $Z$. Since
$Z$ is normal, there exist 
$V$  an open neighborhood of  $cl_{Z}[\cup \{f(x):x\in F\}]$ in $Z$ and $W$ an open neighborhood of
$F$ in $Z$ such that $cl_Z(W)$ misses $cl_Z(V)$.
Consider the open set $\langle V\rangle$ in  $2^Z$. Since $f$ is continuous and $f(F)\subset \langle V\rangle$
there exists an open neighborhood $U$ of $F$ in $X$ such that $U\subset W$ and $f(cl_Z(U))\subset \langle V\rangle$.
Thus, $cl_Z(\cup\{f(x):x\in cl_Z(U)\})$ is in $cl_Z(V)$. The latter misses $cl_Z(W)$ and therefore
$cl_Z(U)$. Therefore, $U$ is as desired.
\end{proof}

\par\bigskip
Proposition \ref{pro:openbrightcolor} implies that if $X$ is closed in $\mathbb R^k$ and
 $\mathcal F$ is an $m$-sized bright coloring of a continuous map $f:X\to {{\rm exp}}_n(\mathbb R^k)$
 then there exists an $m$-sized  open cover $\mathcal U$ of $X$ the closures of whose elements are
 bright colors of $f$. We will use this observation throughout the paper without formally
 referencing it.

\par\bigskip
In the following discussion that leads to the main result we will restrict ourselves to
closed subspace of $\mathbb R^5$. This is done to avoid accumulation of too many variables.
All arguments are valid if one replace "$5$" with any natural number. 

\par\bigskip\noindent
\begin{defin}\label{defin:statementS5n}
$S(5,n)$ denotes the following statement: there exists the smallest integer $K(5,n)$ such that every
continuous fixed-point free map $f$ from a closed subset $X\subset \mathbb R^5$ to ${{\rm exp}}_n(\mathbb R^5)$
is colorable by at most $K(5, n)$ many bright colors.
\end{defin}

\par\bigskip\noindent
\begin{lem}\label{fgh}
Suppose $f,g,h: X\subset Z\to 2^Z$ are maps; and $g$ and $h$ are colorable by at most $N_g$ and $N_h$
(bright) colors, respectively. If $f(x)\subset g(x)\cup h(x)$ for every $x\in X$ then
$f$ is colorable by at most $N_g\cdot N_h$ (bright) colors. 
\end{lem}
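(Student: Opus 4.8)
The plan is to form the pairwise intersections of a coloring of $g$ with a coloring of $h$. Fix a (bright) coloring $\{G_1,\dots,G_{N_g}\}$ of $g$ and a (bright) coloring $\{H_1,\dots,H_{N_h}\}$ of $h$, and set
$$
\mathcal{F}=\{G_i\cap H_j: 1\le i\le N_g,\ 1\le j\le N_h\}.
$$
This family consists of at most $N_g\cdot N_h$ sets, each closed in $X$ as an intersection of two closed sets, and it covers $X$: any $x\in X$ belongs to some $G_i$ and some $H_j$, hence to $G_i\cap H_j$. Thus the whole content of the lemma reduces to checking that each $F=G_i\cap H_j$ is a (bright) color of $f$.

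To verify this, abbreviate $f[S]=\cup\{f(x):x\in S\}$ and define $g[S],h[S]$ likewise. The hypothesis $f(x)\subset g(x)\cup h(x)$ gives $f[F]\subset g[F]\cup h[F]$ at once. Since $F\subset G_i$, monotonicity of $S\mapsto g[S]$ yields $g[F]\subset g[G_i]$; as $G_i$ misses $g[G_i]$ (it is a color of $g$) it also misses $g[F]$, and because $F\subset G_i$ the set $F$ misses $g[F]$. The symmetric argument with $H_j$ gives that $F$ misses $h[F]$. Hence $F$ is disjoint from $g[F]\cup h[F]$, which contains $f[F]$, so $F$ is a color of $f$.

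For the bright version the single extra observation is that closure distributes over finite unions, so
$$
cl_Z(f[F])\subset cl_Z\big(g[F]\cup h[F]\big)=cl_Z(g[F])\cup cl_Z(h[F]).
$$
Monotonicity gives $cl_Z(g[F])\subset cl_Z(g[G_i])$, which $G_i$ misses since it is a bright color; arguing symmetrically for $H_j$ and combining shows $F$ misses $cl_Z(f[F])$, the bright-color condition. There is no genuine obstacle in this lemma; the only point deserving care is this last step, where splitting the closure of a finite union into the union of closures is what lets the two colorings be handled one coordinate at a time.
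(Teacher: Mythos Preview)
Your proof is correct and follows essentially the same approach as the paper: form the family of pairwise intersections of the two colorings and verify that each intersection is a (bright) color using the inclusion $f(x)\subset g(x)\cup h(x)$ together with the fact that closure distributes over finite unions. The only cosmetic difference is that the paper discards empty intersections and treats only the bright case explicitly, whereas you handle both the ordinary and bright versions; neither point affects the argument.
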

\begin{proof}
Let $\mathcal G$ and $\mathcal H$ be bright colorings of $g$ and $h$ of sizes $N_g$ and $N_h$, respectively. Put 
${\mathcal F}=\{G\cap H: G\cap H\not =\emptyset, H\in {\mathcal H}, G\in {\mathcal G}\}$.
Clearly, $|{\mathcal F}|\leq N_g\cdot N_h$.
Since ${\mathcal G}$ and $\mathcal H$ are closed covers of $X$, so is $\mathcal F$.
Fix $F=H\cap G$ in $\mathcal F$. Since, by hypothesis, $f(x)\subset g(x)\cup h(x)$, we conclude that 
$$
cl_Z[\cup\{f(x):x\in H\cap G\}]\subset cl_Z[\cup\{g(x):x\in H\cap G\}]\cup cl_Z[\cup\{h(x):x\in H\cap G\}].
$$
Since $\mathcal G$ and $\mathcal H$ are bright colorings, 
$cl_Z[\cup\{g(x):x\in H\cap G\}]$ and  $cl_Z[\cup\{h(x):x\in H\cap G\}]$ miss $H\cap G$. Therefore, the left side
of the above set inclusion formula misses $H\cap G$ as well, meaning $H\cap G$ is a bright color for $f$.
\end{proof}

\par\bigskip
In what follows, by $\pi_i$ we denote the projection of $\mathbb R^5$ onto its $i$-th coordinate axis.
The next two statements (Lemmas \ref{lem:allsamenumberofmax} and \ref{lem:largefirstprojection}) hold if we replace $\pi_1$ by $\pi_i$ for any $i\in \{1,...,n\}$.
However, we will carry out our arguments for $\pi_1$ for the already mentioned reason
of avoiding unnecessary variables.
\par\bigskip\noindent
\begin{lem}\label{lem:allsamenumberofmax} 
Suppose $n>M\geq 1$; $A$ is closed in $\mathbb R^5$; 
$f:A\to {{\rm exp}}_n(\mathbb R^5)\setminus {{\rm exp}}_{n-1}(\mathbb R^5)$ is continuous and fixed-point free;
$|\{y\in f(x): \pi_1(y) = \max \pi_1(f(x))\}|=M$ for all $x\in A$; and $S(5,n-1)$  is true.
Then $f$ is colorable by at most $[K(5,n-1)]^2$ bright colors.
\end{lem}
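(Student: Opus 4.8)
The plan is to realize $f$ as the ``union'' of two maps with strictly fewer values and then invoke Lemma \ref{fgh}. For each $x\in A$ set
$$
g(x)=\{y\in f(x):\pi_1(y)=\max\pi_1(f(x))\}\qquad\text{and}\qquad h(x)=f(x)\setminus g(x).
$$
By hypothesis $|g(x)|=M$ for every $x$, and since $f$ takes values in ${{\rm exp}}_n(\mathbb R^5)\setminus{{\rm exp}}_{n-1}(\mathbb R^5)$ the set $f(x)$ has exactly $n$ points, whence $|h(x)|=n-M$. As $1\le M<n$, both $M$ and $n-M$ lie between $1$ and $n-1$, so $g$ and $h$ take values in ${{\rm exp}}_{n-1}(\mathbb R^5)$ and $h(x)\neq\emptyset$. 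Because $g(x),h(x)\subset f(x)$ and $x\notin f(x)$, both $g$ and $h$ are fixed-point free. Once I know that $g$ and $h$ are continuous, $S(5,n-1)$ applies to each and yields bright colorings $\mathcal G$ and $\mathcal H$ with $|\mathcal G|,|\mathcal H|\le K(5,n-1)$. Since $f(x)=g(x)\cup h(x)$, Lemma \ref{fgh} then produces a bright coloring of $f$ of size at most $[K(5,n-1)]^2$, which is the assertion.

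The hard part will be the continuity of $g$ and $h$, and this is exactly where the constancy of the maximizer count $M$ is used. First I would check that the top level $c(x)=\max\pi_1(f(x))$ is continuous: since $f$ is continuous into the hyperspace and $\pi_1$ is continuous, $c$ is the composition of $f$ with the standard continuous functional on ${{\rm exp}}_n$ sending a finite set to the maximum of $\pi_1$ over it. The delicate point is the second level. Let $c_2(x)=\max\{\pi_1(y):y\in h(x)\}$ be the largest $\pi_1$-value attained off the top. The hypothesis that exactly $M$ points of $f(x)$ are maximizers says precisely that $c_2(x)<c(x)$ for every $x$, i.e.\ there is a genuine gap at each point. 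Using the Hausdorff (equivalently Vietoris) convergence of $f$ together with the fact that the maximizer count never drops, I would show that $c_2$ is upper semicontinuous, so that $c-c_2$ is a strictly positive function that is locally bounded below.

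With a local gap in hand the splitting is continuous. Fix $x_0$ and set $\delta=c(x_0)-c_2(x_0)>0$. For $x$ near $x_0$, continuity of $c$ forces every maximizer of $f(x)$ to have $\pi_1$-value $c(x)>c(x_0)-\tfrac{\delta}{2}$, while upper semicontinuity of $c_2$ forces every point of $h(x)$ to have $\pi_1$-value $c_2(x)<c(x_0)-\tfrac{\delta}{2}$. Hence, near $x_0$,
$$
g(x)=f(x)\cap\pi_1^{-1}\big((c(x_0)-\tfrac{\delta}{2},\infty)\big),\qquad h(x)=f(x)\cap\pi_1^{-1}\big((-\infty,c(x_0)-\tfrac{\delta}{2}]\big),
$$
and the bounding hyperplane $\pi_1=c(x_0)-\tfrac{\delta}{2}$ meets $f(x)$ in the empty set. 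Intersecting the continuously varying $n$-point set $f(x)$ with a fixed half-space whose boundary it avoids yields continuous maps, giving continuity of $g$ and $h$ at $x_0$; as $x_0$ was arbitrary, $g$ and $h$ are continuous on $A$, and the first paragraph completes the proof. The one place that genuinely requires care---and the main obstacle---is the upper semicontinuity of $c_2$, for which the hypothesis that the maximizer count is identically $M$ is indispensable: without it a non-maximizer could rise to the top level in the limit, collapsing the gap and breaking the split.
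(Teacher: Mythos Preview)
Your proposal is correct and follows essentially the same approach as the paper: the identical splitting $f=g\cup h$ into the $\pi_1$-maximizers and their complement, the same reduction via Lemma~\ref{fgh} and $S(5,n-1)$, and the same use of the gap between the top $\pi_1$-level and the next one to establish continuity. The only difference is cosmetic: the paper verifies continuity of $g$ and $h$ directly with standard Vietoris basic neighborhoods $\langle U_y:y\in f(x)\rangle$ chosen so that the $U_y$'s are pairwise disjoint and $\pi_1$-separated (your ``gap'' condition), whereas you repackage the same idea through the auxiliary functions $c$ and $c_2$ and an upper-semicontinuity claim---whose proof, when written out, is exactly the paper's neighborhood argument.
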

\begin{proof}
Define $g$ and $h$ from $A$ to ${{\rm exp}}_{n-1}(\mathbb R^5)$ as follows:
$$
g(x) = \{z\in f(x): \pi_1(z) =\max \pi_1 (f(x))\} \ {\rm and}\ h(x) = f(x)\setminus g(x).
$$
Since $f(x)$ is finite, $\max \pi_1(f(x))$ exists.  Hence $g(x)$ is defined. Since
$|\{y\in f(x): \pi_1(y) = \max \pi_1(f(x))\}|=M$ and $1\leq M<n$, we conclude that $0<|f(x)\setminus g(x)|<n$ and
$0<|g(x)|<n$. Therefore, $g$ and $h$ are well defined functions from $A$ to ${{\rm exp}}_{n-1}(\mathbb R^5)$.
Observe that $f(x)= g(x)\cup h(x)$ for each $x$. Therefore, by Lemma \ref{fgh},   to reach the conclusion of
our lemma we need to show that $g$ and $h$ are colorable by at most $K(5,n-1)$ bright colors each.
Since we assume that $S(5, n-1)$ holds it suffices to show that $g$ and $h$ are continuous and fixed-point free. 
The latter property follows from the facts that $f$ is fixed-point free and $f(x) = g(x)\cup h(x)$.

To prove continuity of $g$ and $h$, fix $x\in A$ and open neighborhoods ${\mathcal U}_{g(x)}$ and ${\mathcal V}_{h(x)}$
of $g(x)$ and $h(x)$ in ${{\rm exp}}_{n-1}(\mathbb R^5)$. We need to find an open neighborhood
of $x$ in $A$ whose image under $g(x)$ and $h(x)$  are in ${\mathcal U}_{g(x)}$ and ${\mathcal V}_{h(x)}$, respectively.
Without loss of generality we may assume that  the selected neighborhoods are standard, that is, in the form
$$
{\mathcal U}_{g(x)}=\langle U_y: y\in g(x)\rangle\ {\rm and}\  {\mathcal V}_{h(x)}=\langle U_y: y\in f(x)\setminus g(x)\rangle,
$$
where $U_y$ is a fixed bounded open neighborhood of $y$ in $\mathbb R^5$ for each $y\in f(x)$. We may also assume that the following properties hold.
	\begin{description}
		\item[\rm P1] $\min \pi_1(\overline U_y)> \max \pi_1 (\overline U_z)$ whenever $y\in g(x)$ and $z\in f(x)\setminus g(x)$.
		\item[\rm P2] $\overline U_y\cap \overline U_z =\emptyset$ for any distinct $y,z\in f(x)$.
	\end{description}
The property P1 can be achieved since by the definition of $g$, the set $\pi_1(g(x))$ is a singleton and 
its element is strictly greater than  any element of 
$\pi_1(f(x)\setminus g(x))$.
Put ${\mathcal W}_{f(x)} = \langle U_y:y\in f(x)\rangle$. Clearly, ${\mathcal W}_{f(x)}$ is an open neighborhood of $f(x)$.
By continuity of $f$, there exists an open $O$ of $x$ in $A$ such that
$f(O)\subset {\mathcal W}_{f(x)}$. To finish the proof of continuity of $g$ and $h$ it suffices to show
that $g(O)\subset {\mathcal U}_{g(x)}$ and $h(O)\subset {\mathcal V}_{h(x)}$. For this fix an arbitrary $x'\in O$.
By the choice of $O$, we have $f(x')\in {\mathcal W}_{f(x)}$. 
Let $f(x') =\{z_1,...,z_n\}\in {\rm exp}_n(\mathbb R^5)\setminus {\rm exp}_{n-1}(\mathbb R^5)$ and 
$\pi_1(z_i) = \max \pi_1(f(x'))$ for any $i=1,...,M$. By the lemma's condition on $M$, we have 
$\pi_1(z_j)<\max \pi_1(f(x))$ for any $j=M+1,...,n$. By P1, we have 
\begin{description}
	\item[\rm P3] $z_i\in \cup \{U_y:y\in g(x)\}$ for any $i=1,...,M$.
\end{description}
By P2 and P3, we have
\begin{description}
	\item[\rm P4] $z_j\in \cup \{U_y:y\in f(x)\setminus g(x)\}$ for any $j=M+1,...,n$.
\end{description}
By P2 and the fact that $f(x')\in {\mathcal W}_{f(x)}=\langle U_y:y\in f(x)\rangle$, we conclude that
each $U_y$,
participating in the definition of ${\mathcal W}_{f(x)}$, contains exactly one $z_i\in f(x')$.
By P3 and P4, we have
$\{z_1,...,z_M\}\in {\mathcal U}_{g(x)}$ and $\{z_{M+1},...,z_n\}\in {\mathcal V}_{h(x)}$.
Since $g(x')=\{z_1,...,z_M\}$ and $h(x')=\{z_{M+1},...,z_n\}$, we are done. 
\end{proof}

\par\bigskip\noindent
\begin{lem}\label{lem:largefirstprojection}
Suppose $n>1$; $A$ is closed in $\mathbb R^5$; 
$f:A\to {{\rm exp}}_n(\mathbb R^5)\setminus {{\rm exp}}_{n-1}(\mathbb R^5)$ is continuous and fixed-point free;
$|\{y\in f(x): \pi_1(y) = \max \pi_1(f(x))\}|>1$ for all $x\in A$; and $S(5,n-1)$  is true.
Then $f$ is colorable by at most $n\cdot [K(5,n-1)]^2$ bright colors. 
\end{lem}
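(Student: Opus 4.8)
The plan is to stratify $A$ according to the multiplicity function
$M(x)=|\{y\in f(x):\pi_1(y)=\max\pi_1(f(x))\}|$, which by hypothesis takes values in $\{2,\dots,n\}$, apply Lemma \ref{lem:allsamenumberofmax} on the strata where $M$ is a constant smaller than $n$, and dispose of the top stratum by passing to a second coordinate. The first point I would establish is the key regularity of $M$: it is \emph{upper semicontinuous}. Indeed, fix $x_0$; since $f(x_0)$ has exactly $n$ points and $f$ is continuous into the Vietoris topology, for $x$ near $x_0$ the set $f(x)$ consists of $n$ points, one near each point of $f(x_0)$. Points of $f(x_0)$ lying strictly below the maximal value $\max\pi_1(f(x_0))$ give points of $f(x)$ that remain strictly below the maximum for $x$ near $x_0$, while the $M(x_0)$ points realizing the maximum can only separate. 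Hence $M(x)\le M(x_0)$ near $x_0$, so $\{x\in A:M(x)\ge m\}$ is closed in $A$, and therefore in $\mathbb R^5$, for every $m$.

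The level sets $\{M=m\}$ are in general \emph{not} closed, which is the main obstacle: two maximal points may coalesce in a limit, raising $M$, so Lemma \ref{lem:allsamenumberofmax} cannot be applied to $\{M=m\}$ as it stands. I would repair this by peeling the strata from the top while fattening colors. Processing $m=n,n-1,\dots,2$, suppose the portion $\{M\ge m+1\}$ has already been covered by bright colors of $f$; since each such color is a closed subset of $A$ on which the bright-color condition refers only to the original map, Proposition \ref{pro:openbrightcolor} (applied with ambient domain $A$) enlarges them to an $A$-open set $U\supseteq\{M\ge m+1\}$ whose pieces have closures that are again bright colors of $f$. Then $A_m:=(A\setminus U)\cap\{M\ge m\}$ is closed, and on it $M\equiv m$, because $A\setminus U$ forces $M\le m$ while the second factor forces $M\ge m$. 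For $2\le m\le n-1$ the set $A_m$ meets all hypotheses of Lemma \ref{lem:allsamenumberofmax} (closed domain, image in ${{\rm exp}}_n(\mathbb R^5)\setminus {{\rm exp}}_{n-1}(\mathbb R^5)$, constant multiplicity $m<n$, and $S(5,n-1)$), so it is colorable by at most $[K(5,n-1)]^2$ bright colors; I then fatten these, adjoin them to $U$, and continue. A bright color of $f|_{A_m}$ is automatically a bright color of $f$, and a finite union of bright colorings of the closed pieces that cover $A$ is again a bright coloring, so the colors simply accumulate.

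The one stratum outside this scheme is $A_n=\{M=n\}$, on which all $n$ points of $f(x)$ share the maximal first coordinate; there the complementary part would be empty and Lemma \ref{lem:allsamenumberofmax} does not apply. As sanctioned by the remark preceding the lemma, I would rerun the construction on the closed set $A_n$ with $\pi_2$ in place of $\pi_1$ (and, where the points also agree in the second coordinate, with the further coordinates in turn); since the $n$ points are distinct in $\mathbb R^5$ they cannot agree in all five coordinates, so some coordinate eventually separates them and the procedure terminates. Collecting the at most $[K(5,n-1)]^2$ bright colors produced by each multiplicity value $2,\dots,n$, the last handled through this secondary coordinate, yields the bound $n\cdot[K(5,n-1)]^2$.

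The genuinely delicate steps are the two just flagged. The first is the non-closedness of the level sets of $M$: everything else is bookkeeping once $M$ is known to be upper semicontinuous and the open-fattening of Proposition \ref{pro:openbrightcolor} is available, but the neighborhoods $U$ must be chosen so that the peeled pieces $A_m$ are simultaneously closed and of constant multiplicity. The second is the boundary case $M=n$: offloading it onto a secondary coordinate is clear in principle, but arranging the coordinate switches economically enough that this stratum consumes only the single remaining slot, so the grand total stays at $n\cdot[K(5,n-1)]^2$, is where I expect the real effort to lie.
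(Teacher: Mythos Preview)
Your stratification by the multiplicity $M(x)$, the upper–semicontinuity observation, and the peel-from-the-top argument using Proposition~\ref{pro:openbrightcolor} to fatten colors are exactly the paper's approach: the paper's sets $O_m=\{x:M_x<n-m\}$ are open for the reason you give, and its inductive Step~$m$ isolates a closed piece on which $M_x\equiv n-m$ and invokes Lemma~\ref{lem:allsamenumberofmax}. On that core mechanism there is nothing to add.

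The divergence is the top stratum. You read the stated hypothesis $M_x>1$ literally, so $M$ ranges over $\{2,\dots,n\}$, and you are (rightly) uneasy about $M=n$, where all $n$ points share the maximal first coordinate and Lemma~\ref{lem:allsamenumberofmax} is unavailable. Your proposed fix---pass to $\pi_2$, then $\pi_3$, etc.---is not made to work within the budget, and in fact it cannot be: on $\{M=n\}$ the $\pi_2$-multiplicity can take every value in $\{1,\dots,n\}$, so a full secondary stratification is needed, then possibly a tertiary one, and the color count exceeds $n\cdot[K(5,n-1)]^2$.

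The point is that the paper's proof does not face this case because it actually uses a \emph{different} hypothesis. In Step~1 it writes ``since $|\pi_1(f(x))|>1$ for every $x\in A$'', i.e.\ not all $n$ points share the same first coordinate; this is $M_x<n$, not the stated $M_x>1$. With $M_x<n$ the top stratum is $M=n-1$, the value $M=n$ never occurs, and the $n-1$ strata $M=n-1,\dots,1$ cost at most $(n-1)\cdot[K(5,n-1)]^2\le n\cdot[K(5,n-1)]^2$ colors. The application in Theorem~\ref{thm:chromaticnumber}, which establishes $|\pi_{i^*}(f(x))|>1$ before invoking the lemma, confirms that this is the intended hypothesis and that the displayed ``$>1$'' in the statement is a slip. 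Once you read the hypothesis as $|\pi_1(f(x))|>1$, your argument coincides with the paper's and the coordinate-switching paragraph can simply be deleted.
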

\begin{proof} For $m=1,...,n-1$, define $O_m\subset A$ as follows: $x\in O_m$ if and only if  
$M_x=|\{y\in f(x):\pi_1(y)=\max \pi_1(f(x))\}|<n-m$.

\par\medskip\noindent
{\it Claim. $O_m$ is open.}
\par\smallskip\noindent
To prove the claim, fix $x\in O_m$ and let ${\mathcal V}_{f(x)}=\langle V_y:y\in f(x)\rangle$
be an open neighborhood of $f(x)$ such that the following hold:
\begin{enumerate}
	\item $V_y$ is a bounded neighborhood of $y$ for every $y\in f(x)$;
	\item $V_y\cap V_z=\emptyset$ for any distinct $y,z\in f(x)$; and
	\item $\min \pi_1(\overline V_y) >\max \pi_1(\overline V_z)$ whenever $\pi_1(y)=\max \pi_1(f(x))$
	and $\pi_1(z)<\max \pi_1(f(x))$.
\end{enumerate}
It suffices to show now that $f^{-1}({\mathcal V}_{f(x)})\subset O_m$. For this pick $x'\in f^{-1}({\mathcal V}_{f(x)})$.
We have $f(x')\in {\mathcal V}_{f(x)}$. By (2) and (3), $M_{x'}\leq M_x$. Hence, $M_{x'}<n-m$. By the definition of $O_m$,
$x'\in O_m$. The claim is proved.

\par\medskip
We will construct our coloring inductively. For short put $K=[K(5,n-1)]^2$.
\par\medskip\noindent
{\it Step 1.} Put $A_1 = A\setminus O_1$.
Thus, an element $x$ of $A$ is in $A_1$ if and only if $M_x\geq n-1$. Since $|\pi_1(f(x))|>1$  for every $x\in A$ we
conclude that $M_x=n-1$ for every $x\in A_1$. Therefore, by Lemma \ref{lem:allsamenumberofmax}, there exists a
$K$-sized open cover ${\mathcal U}_1$ of $A_1$ the closures of whose elements are bright colors for $f$.

\par\medskip\noindent
{\it Assumption.}   Assume for $m-1$ an open family ${\mathcal U}_{m-1}$ of size at most
$(m-1)\cdot K$ is defined  and the following conditions are met:
\begin{description}
	\item[\rm P1] $\overline U$ is a bright color of $f$  for every $U\in {\mathcal U}_{m-1}$; and
	\item[\rm P2] If $M_x\geq n-(m-1)$ then $x\in \bigcup {\mathcal U}_{m-1}$.
\end{description}

\par\medskip\noindent
{\it Step $m<n$.} 
Put $A_m = A\setminus [O_m\cup (\bigcup {\mathcal U}_{m-1})]$.
By Claim, the set $A_m$ is closed. Pick any $x\in A_m$.  Then $x\not\in O_m$, meaning that $M_x\geq n-m$. Also
$x\not\in \bigcup {\mathcal U}_{m-1}$, which, by P2, implies $M_x<n-(m-1)$. Thus, $n-m\leq M_x<n-m+1$. Therefore,
$M_x=n-m$. Therefore, by Lemma \ref{lem:allsamenumberofmax}, there exists a
$K$-sized open cover ${\mathcal V}_m$ of $A_m$ the closures of whose elements are bright colors for $f$.
Put  ${\mathcal U}_m = {\mathcal U}_{m-1}\cup {\mathcal V}_m$. Clearly, the size of this family is at most $m\cdot K$.
Let us verify P1 and P2 for $m$. Property P1 holds since ${\mathcal U}_m$ is the union of two families
satisfying P1. For P2, observe that $M_x\geq n-m$ if and only if
$x\not\in O_m$. But ${\mathcal U}_m$ covers the complement of $O_m$.

\par\smallskip\noindent
The construction is complete. It suffices to show now that $A=\bigcup {\mathcal U}_{n-1}$.
For this pick $x\in A$. By the lemma's hypothesis, $M_x>1=n-(n-1)$. By P2, $x\in \bigcup {\mathcal U}_{n-1}$.
\end{proof}

\par\bigskip
The base step in the proof of our main theorem uses the following statement (\cite[Proposition 2.9]{bc}):
{\it If $f$ is a continuous fixed-point free map from a closed subset $X$ of $\mathbb R^m$ to
$\mathbb R^m$ then $f$ is colorable by at most $m+3$ bright colors.}

\par\bigskip\noindent
\begin{thm}\label{thm:chromaticnumber} 
There exists an integer $K(m,n)$ such that every continuous fixed-point free map from a closed
subspace $X$ of $\mathbb R^m$ into ${{\rm exp}}_n(\mathbb R^m)$ is colorable by at most $K(m,n)$ many
bright colors.
\end{thm}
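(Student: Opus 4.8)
The plan is to restate the theorem as the assertion that the $\mathbb R^m$-analogue $S(m,n)$ of the statement in Definition \ref{defin:statementS5n} holds for every $n$ (this analogue, together with its constant $K(m,n)$, is legitimate by the standing convention that ``$5$'' may be replaced by any natural number), and then to argue by induction on $n$. For the base case $n=1$, the space ${{\rm exp}}_1(\mathbb R^m)$ is the space of singletons, canonically homeomorphic to $\mathbb R^m$, so a continuous fixed-point free $f$ into ${{\rm exp}}_1(\mathbb R^m)$ is just a single-valued continuous fixed-point free map, and a bright color in the singleton picture is a bright color in the sense of Definition \ref{defin:brightcolor}; hence \cite[Proposition 2.9]{bc} gives $K(m,1)\le m+3$.

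For the inductive step I assume $S(m,n-1)$ and color an arbitrary continuous fixed-point free $f\colon X\to {{\rm exp}}_n(\mathbb R^m)$ with $X$ closed in $\mathbb R^m$. The engine is a repeated use of Proposition \ref{pro:openbrightcolor}, which lets me color a closed ``degenerate'' stratum first and then swell that coloring to an open neighborhood, so that what remains uncolored is a closed set lying in a more uniform region. First set $C=f^{-1}({{\rm exp}}_{n-1}(\mathbb R^m))=\{x:|f(x)|\le n-1\}$. Since ${{\rm exp}}_{n-1}(\mathbb R^m)$ is closed in ${{\rm exp}}_{n}(\mathbb R^m)$ in the Vietoris topology, $C$ is closed, and $f|_C$ maps into ${{\rm exp}}_{n-1}(\mathbb R^m)$; by $S(m,n-1)$ it admits at most $K(m,n-1)$ bright colors, which are automatically bright colors of $f$ because being a bright color depends only on the restriction of $f$ to the color. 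Proposition \ref{pro:openbrightcolor} fattens them to closures of sets $U_i$ open in $X$ with $\bigcup_i U_i\supseteq C$, so the remainder $B'=X\setminus\bigcup_i U_i$ is closed and satisfies $|f(x)|=n$ for all $x\in B'$.

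Now I iterate the same idea on the exactly-$n$-valued map $f|_{B'}$ using the first-coordinate projection. Write $M_x=|\{y\in f(x):\pi_1(y)=\max\pi_1(f(x))\}|$; on the set $\{|f|=n\}$ each locus $\{x:M_x\ge k\}$ is closed (no two points of $f(x)$ can merge there), so $B'_{\ge 2}=\{x\in B':M_x\ge 2\}$ is closed. Lemma \ref{lem:largefirstprojection} colors $f|_{B'_{\ge 2}}$ by at most $n\,[K(m,n-1)]^2$ bright colors of $f$; fattening these through Proposition \ref{pro:openbrightcolor} inside $B'$ leaves a closed remainder $B''=B'\setminus\bigcup_j V_j$ on which $|f|=n$ and $M_x=1$ for every $x$. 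To this remainder I apply Lemma \ref{lem:allsamenumberofmax} with $M=1$, obtaining at most $[K(m,n-1)]^2$ bright colors. The three families together cover $X$ by bright colors, yielding $K(m,n)\le K(m,n-1)+(n+1)\,[K(m,n-1)]^2$, which is finite; this completes the induction.

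The step I expect to be the main obstacle is precisely the tension that forces the fattening device: the regions tailored to the hypotheses of Lemmas \ref{lem:largefirstprojection} and \ref{lem:allsamenumberofmax} are \emph{open}, not closed. Indeed $\{|f|=n\}$ is open (since ${{\rm exp}}_{n-1}$ is closed in ${{\rm exp}}_n$), and within it the locus $\{M_x=1\}$ is open as well, because a strict gap between the largest and the second largest first coordinate of $f(x)$ persists under small perturbations. The lemmas, however, require closed domains with a uniform cardinality and a uniform value of $M$. Proposition \ref{pro:openbrightcolor} is what reconciles this: coloring the closed ``bad'' stratum and enlarging its coloring to an open neighborhood pushes the leftover into the open ``good'' stratum while keeping it closed. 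The care in the write-up will go into justifying these openness and closedness claims (via lower semicontinuity of $|f(\cdot)|$ and the first-coordinate gap) and into checking that each restricted map literally meets the hypotheses of the lemma it is fed to.
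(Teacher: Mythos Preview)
Your scaffold---induct on $n$, peel off the closed locus $\{|f|\le n-1\}$ by the inductive hypothesis, fatten via Proposition~\ref{pro:openbrightcolor}, and then attack the exactly-$n$-valued remainder with Lemmas~\ref{lem:allsamenumberofmax} and~\ref{lem:largefirstprojection}---matches the paper's, and your closedness bookkeeping (in particular that $\{M_x\ge k\}$ is closed inside $\{|f|=n\}$) is fine.

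The divergence is in how you treat the exactly-$n$-valued stratum. You work with $\pi_1$ alone: feed $B'_{\ge 2}=\{M_x\ge 2\}$ to Lemma~\ref{lem:largefirstprojection}, then feed the leftover $\{M_x=1\}$ to Lemma~\ref{lem:allsamenumberofmax} with $M=1$. This is slicker than what the paper does, but it leans on the \emph{literal} hypothesis of Lemma~\ref{lem:largefirstprojection}, namely $M_x>1$. If you look at that lemma's proof, Step~1 actually invokes ``$|\pi_1(f(x))|>1$'' (equivalently $M_x<n$), and this is also exactly the condition the paper checks before each call to the lemma inside Theorem~\ref{thm:chromaticnumber}. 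So the operative hypothesis of Lemma~\ref{lem:largefirstprojection} is $|\pi_1(f(x))|>1$, not $M_x>1$; the stated inequality appears to be a slip. Your set $B'_{\ge 2}$ can contain points with $M_x=n$---all $n$ values of $f(x)$ sitting on a single $\pi_1$-hyperplane---and at such points neither Lemma~\ref{lem:allsamenumberofmax} (which needs $M<n$) nor the argument behind Lemma~\ref{lem:largefirstprojection} yields anything, because $\pi_1$ cannot split $f(x)$ at all.

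The paper's proof is more elaborate precisely to sidestep this: for each coordinate $i$ it introduces the closed degenerate set $S_i=\{x\in E:|\pi_i(f(x))|=1\}$ and runs an inner induction over index sets $I\subset\{1,\dots,m\}$, arranging that on each piece $E_I$ there is some $i^\ast\in I$ with $|\pi_{i^\ast}(f(x))|>1$, so the $\pi_{i^\ast}$-version of Lemma~\ref{lem:largefirstprojection} genuinely applies. Your one-projection shortcut would go through if Lemma~\ref{lem:largefirstprojection} really delivered under the bare assumption $M_x>1$, but as proved it does not cover the $M_x=n$ locus, and that is exactly the gap the paper's multi-projection machinery fills. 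If you want to keep your streamlined route, you would need to first strip off the closed set $\{x\in B':|\pi_1(f(x))|=1\}$ and handle it separately---which, once you try, pushes you toward the paper's coordinate-by-coordinate stratification anyway.
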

\begin{proof}
As in previous statements, to avoid accumulation of extra variables, let us deal with $m=5$.
Thus, we need to show that $K(5,n)$ exists for every natural number $n$.
By \cite[Proposition 2.9]{bc}, $K(5,1)$ exists. Assume that $K(5,n-1)$ exists.
To prove the conclusion of the theorem for $n$, fix any fixed-point free continuous map
$f$ from a closed subspace $X$ of $\mathbb R^5$ into ${{\rm exp}}_n(\mathbb R^5)$.
Next define $L\subset X$ as follows: $x\in L$ if and only if $|f(x)|<n$. 
Then $L$ is closed and the range of
$f|_L$ is a subset of ${{\rm exp}}_{n-1}(\mathbb R^5)$ (notice that $L$ can be empty). Therefore, by our inductive assumption,
there exists a $K(5,n-1)$-sized open cover ${\mathcal U}_L$ of $L$ the closures of whose elements are bright colors.
Put $E=X\setminus \bigcup {\mathcal U}_L$. Then $E$ is closed and $|f(x)|=n$ for every $x\in E$.

For $1\leq i\leq n$,
define $S_i$ as follows: $x\in S_i$ if and only if $x\in E$ and $|\pi_i(f(x))|=1$. Notice that $S_i$ can be empty.
Clearly, $S_i$ is closed.
Inductively, we will first cover 
$\cup_{i\leq n} S_i =\{x\in E:  |\pi_i(f(x))|=1\ for\ some\ i\}$ by bright colors and then we will cover the rest of $E$.

\par\bigskip\noindent
{\it Step 1.} Put $E_i =\cap_{j\not =i} S_j$. Notice that $E_i$ can be empty. 
Since $|f(x)|=n$ for every $x\in E$ we conclude that
$|\pi_i(f(x))|=n$ for all $x\in E_i$. Since $n>1$, by 
Lemma \ref{lem:allsamenumberofmax}  (with $\pi_1$ replaced by $\pi_i$ and $M=1$), there exists a finite open cover ${\mathcal U}_1$ of $\cup_{i\leq n}E_i$ the closures of whose
elements are bright colors for $f$. 

\par\bigskip\noindent
{\it Assumption.} Suppose an open finite family ${\mathcal U}_{k-1}$, where $1\leq k-1<n$,
is defined and the following hold:
\begin{description}
	\item[\rm P1] For every at most $(k-1)$-sized subset $I$ of $\{1,...,n\}$ the inclusion $\cap_{j\not \in I}S_j \subset \bigcup {\mathcal U}_{k-1}$ holds; 
	\item[\rm P2] $\overline U$ is a bright color for every $U\in {\mathcal U}_{k-1}$.
\end{description}

\par\bigskip\noindent
{\it Step $k<n$.} For every $k$-sized $I\subset \{1,...,n\}$ put $E_I = [\cap_{j\not \in I}S_j]\setminus [\bigcup {\mathcal U}_{k-1}]$.
Pick $i^*\in I$. Then $|I\setminus \{i^*\}| = k-1$. By P1, the set $\bigcup {\mathcal U}_{k-1}$ contains
$\cap \{S_j:j\in I\setminus \{i^*\}\}$. Since $E_I$ misses $\bigcup {\mathcal U}_{k-1}$, we conclude that
$|\pi_{i^*}(f(x))|>1$ for every $x$ in $E_I$. By Lemma \ref{lem:largefirstprojection}, there exists a finite open cover
${\mathcal U}_I$ of $E_I$ the closures of whose elements are bright colors. Put
${\mathcal U}_k =[\cup \{{\mathcal U}_I: I\subset \{1,...,n\},\ |I|=k\}]\cup {\mathcal U}_{k-1}$
Properties P1 and P2 clearly hold for $k$. The construction is complete.

\par\bigskip\noindent
Let us show that $\{x\in E: |\pi_i(f(x))|=1\ for\ some\ i\}$ is covered by ${\mathcal U}_{n-1}$. For this pick any $z$ in this set.
Put $I_z = \{i\leq n: |\pi_i(f(x))|>1\}$. Clearly, $|I_z| <n$. Since $x\in E$ we conclude that $|f(x)|=n$.
Therefore, $I_z\not = \emptyset$. Therefore $z\in \cap_{j\not\in I_z} S_j$. By P1, $z\in \bigcup {\mathcal U}_{n-1}$.

To finish the proof we need to cover $E\setminus [\bigcup {\mathcal U}_{n-1}]$ by bright colors.
For this observe that $E\setminus [\bigcup {\mathcal U}_{n-1}]$ contains only those $x$ for which $|\pi_i(f(x))|=n>1$
for all $i$. Therefore, by Lemma \ref{lem:allsamenumberofmax} (with $M=1$), the set in question is covered by bright colors.
Since we always used Lemmas \ref{lem:allsamenumberofmax} and \ref{lem:largefirstprojection} to construct our coloring, the size of the coloring depends on 
$n$ and number $5$ only. Thus, $K(5,n)$ exists. 
\end{proof}
\par\bigskip
A non-technical version of the above result is the following theorem, where $n$ and $k$ denote any natural
numbers.
\par\bigskip\noindent
\begin{thm}\label{thm:colorability}
Any continuous fixed-point free map from a closed subspace $X$ of $\mathbb R^k$ into ${{\rm exp}}_n(\mathbb R^k)$ is 
brightly colorable.
\end{thm}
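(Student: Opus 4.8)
The plan is to obtain this statement as an immediate qualitative consequence of Theorem \ref{thm:chromaticnumber}. Recall that, in analogy with the definition of colorability, a map is brightly colorable precisely when its domain can be covered by finitely many bright colors. Hence all that is required is to exhibit some finite bright coloring of $X$; unlike in the previous theorem, we need not keep track of how many colors are used.

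First I would apply Theorem \ref{thm:chromaticnumber} directly, with the role of $m$ there played by $k$. That theorem guarantees the existence of an integer $K(k,n)$ such that every continuous fixed-point free map from a closed subspace of $\mathbb R^k$ into ${{\rm exp}}_n(\mathbb R^k)$ is colorable by at most $K(k,n)$ many bright colors. In particular, the given map $f$ admits a bright coloring of cardinality at most $K(k,n)$.

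Since $K(k,n)$ is a fixed natural number, the coloring so produced is finite, and therefore $f$ is brightly colorable by definition. This completes the argument.

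I do not expect any genuine obstacle here. The entire technical content, namely the reduction on the value of $|f(x)|$ together with the inductive constructions furnished by Lemmas \ref{lem:allsamenumberofmax} and \ref{lem:largefirstprojection}, has already been carried out inside the proof of Theorem \ref{thm:chromaticnumber}. The only point worth emphasizing is that Theorem \ref{thm:colorability} deliberately forgets the explicit bound $K(k,n)$, retaining merely the existence of a finite bright coloring; in this sense it is strictly weaker than, and is directly implied by, the preceding theorem, which is exactly why it serves as the promised non-technical formulation.
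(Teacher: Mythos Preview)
Your proposal is correct and matches the paper's approach exactly: the paper presents Theorem~\ref{thm:colorability} as the ``non-technical version'' of Theorem~\ref{thm:chromaticnumber} and gives no separate proof, since the existence of a finite bright coloring is immediate once the bound $K(k,n)$ is established.
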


\par\bigskip
Observe that if a continuous map $f:X\to 2^Z$ has the property that $f(x)$ is compact in $Z$ for all $x$
then there exists the continuous extension $\tilde f:\beta X\to 2^{\beta Z}$. For our next discussion
put  ${\mathcal K}(X)=\{A\in 2^X: A\ is\ compact\}$.
In \cite{bc} it is proved that a continuous map $f$ from a closed subspace $X$ of $\mathbb R^k$ to $\mathbb R^k$
is fixed-point free if and only if its continuous extension $\tilde f: \beta X\to \beta \mathbb R^k$ is fixed-point free.
It is natural to ask if the corresponding statement holds for a multivalued map 
$f: X\to {{\rm exp}}_n(\mathbb R^k)$ and its continuous  extension $\tilde f: \beta X\to {{\rm exp}}_n(\beta \mathbb R^k)$. Observe first that
the continuous extension exists since  ${{\rm exp}}_n(\beta \mathbb R^k)$ is compact.  The affirmative answer to this question
follows from the next statement.

\par\bigskip\noindent
\begin{pro}\label{pro:ftobetaf}
 Let $X$ be a closed subspace of a normal space $Z$. If $\mathcal F$ is a bright coloring 
 of a continuous map $f:X\to {\mathcal K}(Z)$, then $\{\beta F:F\in {\mathcal F}\}$ is a bright coloring
 of $\tilde f:\beta X\to {\mathcal K}({\beta Z})$.
\end{pro}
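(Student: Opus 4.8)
The plan is to reduce everything to two standard facts about Stone--\v Cech compactifications of closed subspaces of a normal space, together with one observation about the Vietoris topology. Throughout, since $X$ and each $F\in\mathcal F$ are closed in the normal space $Z$, they are $C^*$-embedded, so I will identify $\beta X$ with $cl_{\beta Z}(X)$ and each $\beta F$ with $cl_{\beta Z}(F)$; note also that $\beta Z$ is compact, so $2^{\beta Z}=\mathcal K(\beta Z)$ and $\tilde f$ indeed lands in $\mathcal K(\beta Z)$. I must check two things: that $\{\beta F:F\in\mathcal F\}$ covers $\beta X$, and that each $\beta F$ is a bright color of $\tilde f$. The covering is immediate because $\mathcal F$ is finite: $\bigcup_{F\in\mathcal F}cl_{\beta Z}(F)=cl_{\beta Z}\big(\bigcup_{F\in\mathcal F}F\big)=cl_{\beta Z}(X)=\beta X$.

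For the bright-color condition, fix $F\in\mathcal F$ and set $C=cl_Z[\cup\{f(x):x\in F\}]$; since $F$ is a bright color of $f$, the closed sets $F$ and $C$ are disjoint in $Z$. The key step is to prove that $\tilde f(p)\subseteq cl_{\beta Z}(C)$ for every $p\in\beta F$. To this end I would introduce the collection $\mathcal D=\{A\in 2^{\beta Z}:A\subseteq cl_{\beta Z}(C)\}$ and observe that it is closed in $2^{\beta Z}$, because its complement is the Vietoris-open set $\{A:A\cap(\beta Z\setminus cl_{\beta Z}(C))\neq\emptyset\}$. For each $x\in F$ we have $\tilde f(x)=f(x)\subseteq C\subseteq cl_{\beta Z}(C)$, so $F\subseteq\tilde f^{-1}(\mathcal D)$; since $\tilde f$ is continuous and $\mathcal D$ is closed, $\tilde f^{-1}(\mathcal D)$ is a closed subset of $\beta X$ containing $F$, hence it contains $\beta F=cl_{\beta Z}(F)$. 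This gives $\tilde f(p)\in\mathcal D$, i.e. $\tilde f(p)\subseteq cl_{\beta Z}(C)$, for every $p\in\beta F$, and therefore $cl_{\beta Z}[\cup\{\tilde f(p):p\in\beta F\}]\subseteq cl_{\beta Z}(C)$.

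It remains to see that $\beta F$ misses this set, for which it suffices to show $cl_{\beta Z}(F)\cap cl_{\beta Z}(C)=\emptyset$. This is the second standard fact: since $Z$ is normal and $F,C$ are disjoint closed sets, they are completely separated, and completely separated subsets of $Z$ have disjoint closures in $\beta Z$ (extend a separating map to $\beta Z$ and pull back the two endpoints). Combining the two displays yields $cl_{\beta Z}(F)\cap cl_{\beta Z}[\cup\{\tilde f(p):p\in\beta F\}]=\emptyset$, so $\beta F$ is a bright color of $\tilde f$, which completes the argument. The main obstacle is the key step: a naive attempt would try to control $\tilde f$ on the remainder $\beta F\setminus F$ pointwise, whereas the clean route is to package the containment $\tilde f(x)\subseteq cl_{\beta Z}(C)$ as membership in the Vietoris-closed set $\mathcal D$ and then propagate it to the whole closure by continuity; verifying that $\mathcal D$ really is closed in the Vietoris topology is the one place that needs care.
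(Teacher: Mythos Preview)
Your proof is correct and follows essentially the same route as the paper's: both use that disjoint closed sets in a normal space have disjoint closures in $\beta Z$, and both reduce the bright-color condition to the containment $\cup\{\tilde f(p):p\in\beta F\}\subseteq cl_{\beta Z}(C)$. The paper simply asserts the equality $cl_{\beta Z}(\cup\{\tilde f(x):x\in\beta F\})=cl_{\beta Z}(\cup\{f(x):x\in F\})$ as a consequence of $\tilde f$ being the continuous extension of $f$, whereas you supply the justification explicitly via the Vietoris-closed set $\mathcal D=\langle cl_{\beta Z}(C)\rangle$.
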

\begin{proof}
Since $X$ is normal and $\mathcal F$ is a finite closed cover of $X$ the family $\{\beta F:F\in {\mathcal F}\}$
is a closed cover of $\beta X$. Therefore, we only need to show that $\beta F$ is a bright color for
$\tilde f$. Since $F$ is a bright color for $f$ the set $F$ misses $cl_Z(\cup \{f(x): x\in F\})$.
Since $F$ and  $cl_Z(\cup \{f(x): x\in F\})$ are disjoint closed subsets of the normal space $Z$ we conclude that
$\beta F$ misses $cl_{\beta Z}(\cup \{f(x): x\in F\})$. Since $\tilde f$ is the continuous extension of $f$
we conclude that $cl_{\beta Z}(\cup \{\tilde f(x): x\in \beta F\})=cl_{\beta Z}(\cup \{f(x): x\in F\})$.
Thus $\beta F$ misses $cl_{\beta Z}(\cup \{\tilde f(x): x\in \beta F\})$, whence $\beta F$ is a bright color of $\tilde f$.
\end{proof}

\par\bigskip\noindent
\begin{thm}\label{thm:fpfcriterion}
Let $f$ be a continuous map from a closed subspace $X$ of $\mathbb R^k$ to
${{\rm exp}}_n(\mathbb R^k)$. Then $f$ is fixed-point free if and only if 
$\tilde f:\beta X\to {{\rm exp}}_n(\beta \mathbb R^k)$ is fixed-point free.
\end{thm}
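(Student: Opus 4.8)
The plan is to obtain both implications from the colorability theorem together with its behavior under the \v{C}ech--Stone extension, with essentially all the analytic content already packaged in Theorem \ref{thm:colorability} and Proposition \ref{pro:ftobetaf}. I would begin by fixing the identifications. Since $X$ is closed in the metric (hence normal) space $\mathbb R^k$, it is $C^*$-embedded, so $\beta X$ may be identified with $cl_{\beta \mathbb R^k}(X)$, and under this identification a point $p\in\beta X$ sits inside $\beta\mathbb R^k$; this is exactly what makes the assertion $p\in\tilde f(p)$ meaningful, since $\tilde f(p)\subset\beta\mathbb R^k$. With this in place the easy implication is immediate: because $\tilde f$ extends $f$, we have $\tilde f(x)=f(x)$ for every $x\in X$, so if $p\notin\tilde f(p)$ for all $p\in\beta X$ then in particular $x\notin f(x)$ for each $x\in X$, i.e. $\tilde f$ fixed-point free implies $f$ fixed-point free.

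For the reverse implication I would first record the elementary but decisive observation that \emph{any brightly colorable multivalued map is automatically fixed-point free}: if $g:Y\subset W\to 2^W$ admits a bright coloring and $p\in Y$ lies in a bright color $F$, then $g(p)\subset\cup\{g(x):x\in F\}$, whose $W$-closure misses $F$; hence $p\notin g(p)$. Granting this, the argument is short. Assume $f$ is fixed-point free. By Theorem \ref{thm:colorability}, $f$ admits a finite bright coloring $\mathcal F$. By Proposition \ref{pro:ftobetaf} (applied with $Z=\mathbb R^k$, noting that the finite-valued $f$ maps into $\mathcal K(\mathbb R^k)$), the family $\{\beta F:F\in\mathcal F\}$ is a bright coloring of the extension $\tilde f:\beta X\to\mathcal K(\beta\mathbb R^k)$. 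The observation above then forces $\tilde f$ to be fixed-point free, which is the desired conclusion.

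The one point requiring care is that Proposition \ref{pro:ftobetaf} is stated for the extension valued in $\mathcal K(\beta\mathbb R^k)$, whereas the theorem concerns the extension valued in ${{\rm exp}}_n(\beta\mathbb R^k)$; I would check these agree. Indeed $f$ takes values in ${{\rm exp}}_n(\mathbb R^k)\subset{{\rm exp}}_n(\beta\mathbb R^k)$, and ${{\rm exp}}_n(\beta\mathbb R^k)$ is a continuous image of the compact space $(\beta\mathbb R^k)^n$, hence compact and therefore closed in the compact Hausdorff hyperspace $\mathcal K(\beta\mathbb R^k)$. The continuous extension of $f$ into $\mathcal K(\beta\mathbb R^k)$ is unique (the target being compact Hausdorff and $X$ dense in $\beta X$), and its image lies in the closure of $f(X)$, hence inside ${{\rm exp}}_n(\beta\mathbb R^k)$. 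So the map produced by Proposition \ref{pro:ftobetaf} is precisely the $\tilde f:\beta X\to{{\rm exp}}_n(\beta\mathbb R^k)$ named in the statement.

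I expect no deep obstacle here; the remaining work is bookkeeping. The subtlest spots are the two identifications just discussed: that $\beta X=cl_{\beta\mathbb R^k}(X)$ via $C^*$-embeddedness of $X$ (so that fixed points of $\tilde f$ are interpreted through $\beta X\hookrightarrow\beta\mathbb R^k$), and that the $\mathcal K$-valued and ${{\rm exp}}_n$-valued extensions coincide. Once these are settled, the coloring-to-fixed-point-free lemma converts the transferred bright coloring directly into fixed-point freeness of $\tilde f$.
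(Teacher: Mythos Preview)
Your proposal is correct and follows essentially the same route as the paper: invoke Theorem~\ref{thm:colorability} to get a bright coloring of $f$, transfer it via Proposition~\ref{pro:ftobetaf} to a bright coloring of $\tilde f$, and read off fixed-point freeness. The additional bookkeeping you supply---the identification $\beta X=cl_{\beta\mathbb R^k}(X)$ and the agreement of the $\mathcal K(\beta\mathbb R^k)$-valued and ${{\rm exp}}_n(\beta\mathbb R^k)$-valued extensions---is more explicit than what the paper records, but it does not constitute a different argument.
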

\begin{proof}
Sufficiency is obvious. To prove necessity, let $\mathcal F$ be a bright coloring of $f$.
By Proposition \ref{pro:ftobetaf}, $\{\beta F:F\in {\mathcal F}\}$ is a coloring for $\tilde f$.
Since $\{\beta F:F\in {\mathcal F}\}$ covers $\beta X$ and $f(\beta F)$ misses $\beta F$ for each
$F\in {\mathcal F}$, we conclude that $\tilde f$ does not fix any point.
\end{proof}

\par\bigskip
Using spectral techniques it is observed in \cite[Corollary 3.5.7]{VM2} that if $f$ is a continuous colorable
self-map on a separable metric space $X$ then one can find a continuous fixed-point free extension 
$\bar{f}:bX\to bX$, where $bX$ is a metric compactification of $X$ of the same dimensionality as that of $X$. Using the same technique
we will next outline a proof  for the following metric version of   Theorem \ref{thm:fpfcriterion} for natural numbers
$n$ and $k$.

\par\bigskip\noindent 
\begin{thm}\label{thm:metricversion}
Let $f:\mathbb R^k\to {{\rm exp}}_n(\mathbb R^k)$ be continuous. Then $f$ is fixed-point free
if and only if there exists a continuous fixed-point free extension 
$\bar f:b\mathbb R^k\to {{\rm exp}}_n(b\mathbb R^k)$, where $b\mathbb R^k$ is some metric compactification
of $\mathbb R^k$ of dimension $k$.
\end{thm}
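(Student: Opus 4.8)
The plan is to follow the spectral strategy of \cite[Corollary 3.5.7]{VM2}, replacing the single-valued extension used there by the multivalued extension produced in Proposition \ref{pro:ftobetaf}. Sufficiency is immediate: if $\bar f:b\mathbb R^k\to {{\rm exp}}_n(b\mathbb R^k)$ is a fixed-point free extension of $f$, then for each $x\in\mathbb R^k$ we have $\bar f(x)=f(x)$ and $x\notin\bar f(x)$, so $f$ is fixed-point free. For necessity, assume $f$ is fixed-point free. By Theorem \ref{thm:colorability}, $f$ admits a finite bright coloring $\mathcal F=\{F_1,\dots,F_p\}$. Since every value $f(x)$ is a finite, hence compact, subset of $\mathbb R^k$, the extension $\tilde f:\beta\mathbb R^k\to {{\rm exp}}_n(\beta\mathbb R^k)$ exists, and by Proposition \ref{pro:ftobetaf} the family $\{\beta F_1,\dots,\beta F_p\}$ is a bright coloring of $\tilde f$. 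Writing $C_i=cl_{\beta\mathbb R^k}(\cup\{\tilde f(x):x\in \beta F_i\})$, this amounts to finitely many disjointness relations $\beta F_i\cap C_i=\emptyset$ between compact sets, which together witness the fixed-point freeness of $\tilde f$.

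Next I would realize $\beta\mathbb R^k$ as the inverse limit $\varprojlim\{b_\alpha\mathbb R^k,\ q_{\beta\alpha}\}$ of the directed family of metric compactifications of $\mathbb R^k$, with limit projections $q_\alpha:\beta\mathbb R^k\to b_\alpha\mathbb R^k$ extending the identity on $\mathbb R^k$. Because the functor ${{\rm exp}}_n$ is continuous on compact Hausdorff spaces and commutes with inverse limits, one has ${{\rm exp}}_n(\beta\mathbb R^k)=\varprojlim {{\rm exp}}_n(b_\alpha\mathbb R^k)$ with projections ${{\rm exp}}_n(q_\alpha)$. The spectral factorization theorem, applied to the factorizing spectrum of metric compactifications, then yields (after passing to a cofinal subsystem) maps $\bar f_\alpha:b_\alpha\mathbb R^k\to {{\rm exp}}_n(b_\alpha\mathbb R^k)$ extending $f$ and satisfying ${{\rm exp}}_n(q_\alpha)\circ\tilde f=\bar f_\alpha\circ q_\alpha$. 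To select one stage at which the coloring survives, I would invoke the standard fact that disjoint compact subsets of an inverse limit of compacta have disjoint projections at some stage: applying it to each pair $(\beta F_i,C_i)$ and using directedness to dominate the finitely many indices so obtained, I get a single $\alpha$ with $q_\alpha(\beta F_i)\cap q_\alpha(C_i)=\emptyset$ for all $i$.

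Set $b\mathbb R^k=b_\alpha\mathbb R^k$, $\bar f=\bar f_\alpha$, and $\tilde F_i=q_\alpha(\beta F_i)=cl_{b\mathbb R^k}(F_i)$. From $\bar f\circ q_\alpha={{\rm exp}}_n(q_\alpha)\circ\tilde f$ one computes $\cup\{\bar f(y):y\in \tilde F_i\}=q_\alpha(\cup\{\tilde f(z):z\in\beta F_i\})$, so its closure lies in the compact set $q_\alpha(C_i)$, which misses $\tilde F_i$. Hence $\{\tilde F_1,\dots,\tilde F_p\}$ is a bright coloring of $\bar f$; in particular a point $w$ fixed by $\bar f$ would lie in some $\tilde F_i$ together with $\bar f(w)$, contradicting $\tilde F_i\cap q_\alpha(C_i)=\emptyset$. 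Since $\bar f$ extends $f$ and takes values in ${{\rm exp}}_n(b\mathbb R^k)$ (the continuous image of a set of size at most $n$ again has size at most $n$), this is the required extension once the dimension is controlled.

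The main obstacle is precisely the dimension requirement $\dim b\mathbb R^k=k$. Monotonicity of dimension for subspaces of a metric space already gives $\dim b\mathbb R^k\geq \dim \mathbb R^k=k$, so only the reverse inequality is at issue, and it forces a constraint on which stage $\alpha$ may be chosen. Exactly as in \cite[Corollary 3.5.7]{VM2}, I would carry out the factorization inside the cofinal subfamily of metric compactifications of $\mathbb R^k$ of dimension $k$; because that subfamily is cofinal in the full spectrum, the index $\alpha$ above may be taken within it without disturbing either the factorization of $\tilde f$ or the finitely many separations guaranteeing brightness. This produces a $k$-dimensional metric compactification $b\mathbb R^k$ carrying the fixed-point free extension $\bar f$, which completes the proof.
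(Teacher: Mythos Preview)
Your proposal is correct and follows essentially the same spectral route as the paper: both realize $\beta\mathbb R^k$ as an inverse limit of $k$-dimensional metric compactifications and factor $\tilde f$ through the spectrum via \v S\v cepin's theorem. The only difference is expository---where the paper simply asserts that compactness lets one assume each $f_\alpha$ is fixed-point free, you spell out the mechanism by pushing the bright coloring $\{\beta F_i\}$ down through the projections and separating the finitely many disjoint compact pairs at some stage.
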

\begin{proof}
Sufficiency is obvious. Let us outline a proof of necessity. By Theorem \ref{thm:fpfcriterion},
$\tilde f:\beta \mathbb R^k\to {{\rm exp}}_n(\beta\mathbb R^k)\subset 2^{\beta\mathbb R^k}$
is fixed-point free. By \v S\v epin spectral theorem \cite{S}, we can find spectra 
$\{b_\alpha(\mathbb R^k), \pi^\gamma_\alpha, {\mathcal A}\}$
and $\{2^{b_\alpha(\mathbb R^k)}, p^\gamma_\alpha, {\mathcal A}\}$
with inverse limits $\beta \mathbb R^k$ and $2^{\beta \mathbb R^k}$, respectively, and a family of maps 
$\{f_\alpha:\alpha\in {\mathcal A}\}$ such that
the following hold:
\begin{enumerate}
	\item $b_\alpha(\mathbb R^k)$ is a metric compactification of $\mathbb R^k$ of dimension $k$ for all $\alpha$
	\item $\pi^\gamma_\alpha$ and $p^\gamma_\alpha$ are identity maps on $\mathbb R^k$ and $2^{\mathbb R^k}$, respectively.
	\item $f_\alpha\circ \pi_\alpha = p_\alpha\circ f$.
\end{enumerate}
Since $\tilde f$ is fixed-point free and $b_\alpha(\mathbb R^k)$ is compact for every $\alpha$
we may assume that $f_\alpha$ is fixed-point free for every $\alpha$.
By (2) and (3), each $f_\alpha$ coincides with $f$ on $\mathbb R^k$. Therefore, each 
$\{f_\alpha, b_{\alpha}(\mathbb R^k)\}$ serves our purpose.
\end{proof}

\par\bigskip
We would like to finish the paper by commenting on a number of colors sufficient to paint a given graph.
If one follows the argument of Theorem \ref{thm:chromaticnumber} or the argument for the reals outlined in
the introduction one will quickly see that the size of the coloring constructed for the case 
${{\rm exp}}_n(\mathbb R^k)$ is at least $k^n$. Therefore, it is natural to wonder if an estimate
for the required number of colors can be represented as a polynomial of both $n$ and $k$.
\par\bigskip

\end{document}